\documentclass[a4]{article}

\usepackage{amsthm,amsmath,bm,amssymb,extarrows,mathptmx,bbm,wrapfig,cases,color}
\usepackage[dvipdfmx]{graphicx}
\usepackage{pxfonts}

\newtheorem{theo}{Theorem}
\newtheorem{cond}{Condition}

\newtheorem{lemm}{Lemma}
\newtheorem{rema}{Remark}

\makeatletter

\@addtoreset{equation}{section}

\@addtoreset{figure}{section}

\@addtoreset{table}{section}
\makeatother

\title{A New { D}iscretization {S}cheme for One Dimensional Stochastic Differential Equations Using Time Change Method}
\author{Masaaki Fukasawa\footnote{
		Graduate School of Engineering Science,
		Osaka University,
		1-3, Machikaneyama-cho, Toyonaka,
		Osaka, 560-8531, Japan,
		email~:~\texttt{fukasawa@sigmath.es.osaka-u.ac.jp}
	}
\quad
and
\quad
 Mitsumasa Ikeda\footnote{
 	Graduate School of Engineering Science,
 	Osaka University,
 	1-3, Machikaneyama-cho, Toyonaka,
 	Osaka, 560-8531, Japan,
 	email~:~\texttt{mikeda@sigmath.es.osaka-u.ac.jp}
 }
}
\begin{document}
\maketitle

	\begin{abstract}
		We propose a new numerical method for one dimensional
	 stochastic differential equations (SDEs). The main idea of this
	 method is based on { a} representation of
	 { a} weak
	 solution { of a SDE}
	 with a time changed Brownian motion, dated back to
	 Doeblin (1940). In { cases} where the diffusion coefficient is
	 bounded and { $\beta$-H\"{o}lder continuous with $0
	 < \beta \leq 1$}, we provide the rate of
	 strong convergence.
{An advantage of our approach is that we approximate the weak solution,
	 which enables us to treat a SDE with no strong solution.
Our scheme is the first to achieve the strong convergence for the case
	  $0 < \beta
	 < 1/2$.}
	\end{abstract}

	\section{Introduction}
	In this article, we provide a numerical method { of
	approximating} a weak solution { of} a one dimensional stochastic
	differential equation. There are many studies about numerical
	approximation for SDEs which converges strongly to the
	solution. A variety of applications { includes
	path-dependent} option pricing in
	{ financial engineering}. 
{ Here we focus on the}
following one-dimensional SDE;
	\begin{align}\label{main}
		dX_t=&\sigma(t,X_t)dW_t.
	\end{align}
{
Such a SDE model is called a {\it local volatility model}
and popular in financial practice. Although (\ref{main}) does not
include a drift term, we remark that under appropriate conditions,
a general one dimensional SDE with drift can be reduced to (\ref{main});}
 time homogeneous one-dimensional SDEs can be
	transformed to ones without drift term by {\it scale function}
	{ in a pathwise sense},
	and time inhomogeneous SDEs { also} can be transformed to martingales
	using { the Girsanov-Maruyama} transformation
	{ in a sense of law.} 

	In order to study numerical scheme of SDEs (\ref{main}), we must
	discuss the conditions where the existence and uniqueness of the
	solution {hold} in some different senses; {\it strong
	uniqueness}, {\it pathwise uniqueness} and {\it uniqueness in
	the sense of probability law}. Many researchers have been
	studied  the unique existence of the solution to SDEs for a long
	time. The most famous condition for the strong unique existence
	of the solution to SDE is Lipschitz continuity and linear growth
	of the drift and diffusion coefficient (see\cite{KandS}).

	Bru and Yor discussed in \cite{bruyo2002} about this
	issue. According to \cite{bruyo2002}, W.~Doeblin has written a
	paper about this issue before the many facts about the structure
	of martingale were found. He { showed that a} diffusion process can
	be represented by some stochastic process which is {
	driven by a } time
	changed Brownian motion. 
{ Although this Doeblin's work in 1940 became public only
	after 2000,  the idea was rediscovered and extended in
	stochastic calculus; already in a textbook~\cite{watanabe} by
	Ikeda and Watanabe in 1984, it is shown that 
a one dimensional SDE of the form (\ref{main})  in a certain
	class  has a unique solution represented as a time
	changed Brownian motion, where the time change is given as 
	a solution of a random  ordinary differential equation, 
as we see in the next section in more detail.
	We use this representation to construct a new approximation
	scheme for one
	dimensional SDEs. 
For time homogeneous case, i.e., $\sigma(t,x)=\sigma(x)$ in (\ref{main}),
Engelbert and Schmidt \cite{engelbert} gave an
	an equivalent condition for the weak existence and uniqueness
	in the sense of probability law, under which the
	weak solution is represented as the time-changed Brownian motion.
 For time homogeneous SDEs, an
	excellent survey \cite{taguchiphd}
 about the existence and uniqueness of SDEs is available.
}
	
	The most famous numerical scheme for SDEs is {\it the Euler-{Maruyama}
	Scheme}. This method approximates a solution of SDEs in a very
	similar way to {the Euler scheme} for ordinary differential
	equations. 
It is well known that the Euler-{Maruyama
	approximation}  converges to the strong solution of SDE uniformly in the sense of
	$L^p$ with convergence rate $n^{-1/2}$ when the diffusion
	coefficient is Lipschitz continuous~\cite{KandP}.
Under the $\beta$-H\"{o}lder continuity of the diffusion coefficient
 $\sigma(t,x)$,  where
	 $1/2\leq\beta\leq1$, Gy\"{o}ngy and R\'{a}sonyi \cite{gyongy2002}
{ showed}
that for any $T>0$ 
	 there exists a constant $C>0$ such that

	\begin{align}
	E\sup_{0\leq t\leq T}|X_t-X^{(n)}_t|\leq\begin{cases}\label{p1}
	\frac{C}{\mathrm{ln}^{1/2}n}&\text{if}\ \beta=1/2\\
	Cn^{-(\beta-1/2)}&\text{if} \ \beta\in(1/2,1]
	\end{cases}
	\end{align}
{ for any $n \geq 2$},
	where $X_t$ is { the strong} solution of SDE(\ref{main}) and $X^{(n)}_t$ is
	its Euler{-Maruyama} approximation of step size
	$1/n$. 
{ When $\beta < 1/2$, the existence of a strong solution is
	lost in general~\cite{Barlow} and no numerical scheme is
	available so far.}
	
	In Section~\ref{Scheme}, we {propose a new method of
	approximating SDE (\ref{main})}.
	 In Section~\ref{mainsection}, we provide {  convergence rates of
	 our method under the $\beta$-H\"{o}lder condition 
with $0 < \beta \leq 1$, and some 
	 smoothness condition of diffusion coefficient.
An advantage of our approach is that we approximate the weak solution,
	 which enable us to treat a SDE with no strong solution.
Our scheme is the first to achieve the strong convergence for $0 < \beta
	 < 1/2$, and provides a better convergence rate than
 in \cite{gyongy2002} for $1/2 \leq \beta < 2/3$.}

\section{ Discretization with time change}\label{Scheme}
	Let
	$(\Omega,\mathcal{F},\mathbb{P},\{\mathcal{F}_t\}_{t\geq0})$ be
	a filtered probability space and $\{B_t\}$ be a
	$\{\mathcal{F}_t\}$-Brownian motion. Throughout this paper, we
	consider one-dimensional {SDE(\ref{main}) under the
	following condition.}
\begin{cond}\label{condbdd}
	There are positive constants $C_1,C_2$ such that
 $C_1\leq\sigma(t,x)\leq C_2$ for { all} $(t,x)\in[0,\infty)\times\mathbb{R}$.
\end{cond}
{ Our method is based on the following theorem from
\cite{watanabe}}.

\begin{theo}\label{maintheo}
	Suppose we are given a one dimensional
 $\{\mathcal{F}_t\}$-Brownian motion $b=\{b_t\}$ and an
 $\mathcal{F}_0$-measurable random variable $X_0$. Define $\xi=\{\xi_t\}$ by
 $\xi_t=X_0+b_t$. If $\sigma(t,x)$ satisfies Condition~\ref{condbdd} and
 there exists a process of time change $\varphi$ such that
	\begin{equation}
	\varphi(t)=\int_{0}^{t}\frac{\mathrm{d}s}{\sigma^{2}(\varphi(s),\xi(s))}\label{4}
	\end{equation}
	holds and if such a $\varphi$ is unique, (i.e., $\psi$ is another process of time change satisfying (\ref{4}), then $\varphi_t\equiv\psi_t$ a.s.), then the solution of (\ref{main}) with initial value $X_0$ exists and is unique. Moreover, if we denote $\tau(t):=\varphi^{-1}(t)$, inverse function of $t\mapsto\varphi(t)$, then the solution is given by $X_t=\xi_{\tau(t)}$.
\end{theo}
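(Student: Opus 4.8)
The plan is to prove existence and uniqueness separately, exploiting the fact that Condition~\ref{condbdd} turns $\varphi$ into a genuine (bi-Lipschitz) time change. First I would record the elementary analytic properties of $\varphi$: since the integrand in (\ref{4}) lies between $1/C_2^2$ and $1/C_1^2$, the map $t\mapsto\varphi(t)$ is absolutely continuous and strictly increasing with $\varphi'(s)=1/\sigma^2(\varphi(s),\xi(s))$ and $\varphi(0)=0$, so its inverse $\tau=\varphi^{-1}$ is well defined, strictly increasing and absolutely continuous. Differentiating $\varphi(\tau(t))=t$ and using $\varphi(\tau(t))=t$ together with $\xi(\tau(t))=X_0+b_{\tau(t)}=X_t$ then yields $\tau'(t)=\sigma^2(t,X_t)$, i.e. $\tau(t)=\int_0^t\sigma^2(s,X_s)\,\mathrm{d}s$.

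For existence, set $M_t:=b_{\tau(t)}$ so that $X_t=X_0+M_t$. Because each $\tau(t)$ is a stopping time for the underlying filtration and $t\mapsto\tau(t)$ is continuous, the time-change theorem for continuous local martingales shows that $M$ is a continuous local martingale with respect to the time-changed filtration $\mathcal{G}_t=\mathcal{F}_{\tau(t)}$, whose quadratic variation is $\langle M\rangle_t=\langle b\rangle_{\tau(t)}=\tau(t)=\int_0^t\sigma^2(s,X_s)\,\mathrm{d}s$. Since $\sigma$ is bounded away from $0$, I would then define $W_t:=\int_0^t\sigma(s,X_s)^{-1}\,\mathrm{d}M_s$ and compute $\langle W\rangle_t=t$, so that $W$ is a Brownian motion by L\'evy's characterization; unwinding the definition gives $\mathrm{d}X_t=\mathrm{d}M_t=\sigma(t,X_t)\,\mathrm{d}W_t$ with $X_0$ as initial value, so $X_t=\xi_{\tau(t)}$ is a solution.

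For uniqueness I would run this argument in reverse. Given any solution $X$ driven by some Brownian motion, $X-X_0$ is a continuous local martingale with $\langle X\rangle_t=A_t:=\int_0^t\sigma^2(s,X_s)\,\mathrm{d}s$, which is strictly increasing by Condition~\ref{condbdd}; by the Dambis--Dubins--Schwarz theorem there is a Brownian motion $\eta$ started at $X_0$ with $X_t=\eta_{A_t}$. Writing $\psi:=A^{-1}$ and differentiating $A(\psi(s))=s$ exactly as above shows that $\psi$ solves the time-change equation (\ref{4}) with $\eta$ in place of $\xi$. The key point is that $\eta$ has the same law as $\xi=X_0+b$, so once one knows that for a.e.\ Brownian path the equation (\ref{4}) admits only one solution, the time change $\psi$---and hence $X=\eta\circ\psi^{-1}$---is a fixed measurable functional of the driving Brownian path, which forces the law of $X$ to coincide for every solution and gives uniqueness in law.

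I expect the main obstacle to be the uniqueness step rather than the construction: the hypothesis guarantees pathwise uniqueness of $\varphi$ for the one fixed Brownian motion $b$, whereas a competing solution of the SDE comes equipped with its own Brownian motion $\eta$, so care is needed to transfer the assumed uniqueness to a.e.\ realization of $\eta$ and thereby to the level of laws. A secondary technical point is verifying the measurability and stopping-time hypotheses required to apply the time-change theorem to $b_{\tau(t)}$, i.e.\ checking that each $\tau(t)$ is an $\{\mathcal{F}_s\}$-stopping time and that the time-changed filtration $\mathcal{G}_t$ satisfies the usual conditions.
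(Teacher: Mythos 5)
The paper never proves Theorem~\ref{maintheo}: it is imported from Ikeda--Watanabe~\cite{watanabe}, so there is no internal proof to compare against, and your reconstruction follows exactly the route of that textbook argument. Your existence half is complete and correct: Condition~\ref{condbdd} makes $\varphi$ bi-Lipschitz, each $\tau(t)=\varphi^{-1}(t)\leq C_2^2t$ is a bounded stopping time because $\varphi$ is adapted and continuous (indeed $\{\tau(t)\leq s\}=\{\varphi(s)\geq t\}$), so $M_t=b_{\tau(t)}$ is a continuous martingale for the time-changed filtration with $\langle M\rangle_t=\tau(t)=\int_0^t\sigma^2(s,X_s)\,ds$, and L\'evy's characterization applied to $W_t=\int_0^t\sigma(s,X_s)^{-1}dM_s$ produces the driving Brownian motion. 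The uniqueness half has the right skeleton (Dambis--Dubins--Schwarz, then uniqueness for the random ODE), and you have correctly located the crux in the transfer step; but you leave that step open, and it is the only place where real mathematical work remains, so it deserves to be spelled out.

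To close it, observe that (\ref{4}) is a deterministic integral equation applied path by path: whether the equation driven by a fixed continuous path $w$ admits at most one solution is a property of $w$ alone. Reading the uniqueness hypothesis pathwise (which is how \cite{watanabe} arranges it), the set $U$ of paths for which uniqueness holds carries full measure under the law of $\xi$. Your DDS process $\eta=X_0+\beta$ has the same law as $\xi=X_0+b$, since $\beta$ is a Brownian motion for a filtration whose time-zero $\sigma$-field contains $X_0$; hence $U$ also carries full measure under the law of $\eta$. On $U$ the solution map $w\mapsto\Phi(w)$ is single-valued, and it is universally measurable by a selection argument applied to the Borel set $\{(w,f): f\ \text{solves the equation driven by}\ w\}$, whose sections over $U$ are singletons. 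Consequently $(\eta,\psi)$ and $(\xi,\varphi)$ have the same law, and therefore so do $X=\eta\circ\psi^{-1}$ and $\xi_{\tau(\cdot)}=\xi\circ\varphi^{-1}$, which is the asserted uniqueness. One caveat: the hypothesis as literally stated in the paper is weaker than the pathwise reading --- it asserts uniqueness among adapted time-change \emph{processes} on the one fixed space --- and bridging process-level uniqueness to pathwise uniqueness for a.e.\ path requires an extra argument (e.g.\ measurable selection of a pair of distinct solutions, or extremal solutions when $\sigma$ is continuous). That looseness belongs to the paper's statement rather than to your proposal, but a complete proof must resolve it one way or the other.
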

\begin{rema}
A sufficient condition for the ODE (\ref{4}) to be well-posed 
is that $\sigma(y,x)$ is locally Lipschitz continuous in $y$ and
 satisfies the inequality
	\begin{equation}
		\|\sigma^{-2}(y,x)\|\leq a(x)|y|+b(x)\label{contibbd}
	\end{equation}
	for all $y\in[0,\infty)$ and $x\in\mathbb{R}$, where $a(x)$ and
 $b(x)$ are some continuous non-negative functions of $x$;
see \cite{grigorian}.
In our setting,
the local Lipschitz continuity of $\sigma(t,x)$ in
 $t$ is sufficient because the condition (\ref{contibbd}) follows from 
the boundedness of $\sigma^{-2}(t,x)$.
\end{rema}

The main goal of this paper is to build a numerical approximation of
solution $\{X_t\}$ of the SDE (\ref{main}) using
Theorem~\ref{maintheo}. In order to approximate this time-changed
Brownian motion, we will first make an approximation of Brownian motion
$\{\xi_t\}$ by $\{\xi^{(n)}_t\}$ which is { a} linear interpolation of a
random walk generated by normal distributed random variables, that is
\begin{equation}
\xi^{(n)}_{t}:=\xi_{\lfloor nt\rfloor/n}+(t-\frac{\lfloor nt\rfloor}{n})(\xi_{(\lfloor nt\rfloor+1)/n}-\xi_{\lfloor nt \rfloor/n})\label{discBM}
\end{equation}
where $(\xi_{(\lfloor nt\rfloor+1)/n}-\xi_{\lfloor nt \rfloor/n})\sim\mathcal{N}(0,1/n)$.
Second, we approximate $\{\varphi(t)\}$ by $\varphi_n(t)$, Euler Method for ordinary differential equation,i.e.,
\begin{equation}
\varphi_n(t)=\varphi_n(\frac{k}{n})+(t-\frac{k}{n})\frac{1}{\sigma^2\left(\varphi_n(k/n),\xi_{k/n}\right)},\hspace{10pt}t\in(\frac{k}{n},\frac{k+1}{n}] \label{phi_inter}
\end{equation}
Third, we make the inverse function $\tau_n(t)$ of $t\longmapsto\varphi_n(t)$ by
\begin{equation}
\tau_n(t)=\frac{k}{n}+\frac{t-\varphi_n(\frac{k}{n})}{\varphi_n(\frac{k+1}{n})-\varphi_n(\frac{k}{n})}\frac{1}{n} \label{inverphi_n}
\end{equation}
where $t\in\left[\varphi_n(\frac{k}{n}),\varphi_n(\frac{k+1}{n})\right)$ . We can easily make sure that $\tau_n(t)$ is inverse function of $\varphi_n(t)$ by its definition. 

The concrete algorithm of this method is given as below.
\begin{itemize}
	\item[STEP1] Make $\xi_{t_j},\ j=0,1,2,\cdots$ by normal distributed random sequence and compute $\varphi_n(t_j)$ for these $j$s. 
	\item[STEP2] At the first instant $\varphi_n(t_j)$ passes over $t$, calculate $\tau_n(t)$ using the formula(\ref{inverphi_n}) where we select $k$ in (\ref{inverphi_n}) as $k=j-1$.
	\item[STEP3] Using $\tau_n(t)$ and (\ref{discBM}), calculate $\xi^{(n)}_{\tau_n(t)}$. 
\end{itemize}
Thus we can obtain a path of $\xi^{(n)}_{\tau_n(t)}$ using [STEP1]-[STEP3].
The main result of this paper is discretization error of $\{\xi^{(n)}_{\tau_n(t)}\}$ in the sense of $L^p$ under H\"{o}lder condition of $\sigma(t,s)$, which will be provided in next section.

\section{{Rate of convergence}}
\label{mainsection}
In this section, we provide convergence {rates of our
approximation scheme}. Theorem~\ref{mainresult} declares that under the
{ $\beta$-H\"older continuity of $\sigma(t,x)$ with $\beta \in (0,1]$}
our numerical approximation converges towards the
exact solution in the sense of $L^p$ uniformly and the convergence rate
is $n^{-\alpha^2\beta}$, {where $\alpha$ is an arbitrary value smaller
than $1/2$.}
Theorem~\ref{theosmooth} provides a more precise
convergence rate {$n^{-\alpha}$} when $\sigma$ is sufficiently smooth.

\begin{theo}\label{mainresult}
	Let $\sigma(t,x)$ { satisfy Condition~\ref{condbdd}}
 and suppose that there exist {constants} $C_\beta>0$ and $L_T>0$ such
 that for $s,t\leq T$,
	\begin{align}
		|\sigma(s,x)-\sigma(t,y)|\leq L_T
	 |s-t|+C_\beta|x-y|^\beta.
\label{condthm2}
	\end{align}
	Let $\xi_t,\xi^{(n)}_t,\tau(t),\tau_n(t)$ be defined in the
 previous section. Then, for {any $T>0$, $p\geq 1$ and $\alpha \in [0,1/2)$}, there exists a positive constant
 $\tilde{K}_T$ such that
	\begin{align}
		\left\{E\left[\sup_{t\leq T}\left|\xi^{(n)}_{\tau_n(t)}-\xi_{\tau(t)}\right|^p\right]\right\}^{1/p}\leq \tilde{K}_{T}\  n^{-\alpha^2\beta}.\label{mainrate}
	\end{align}
\end{theo}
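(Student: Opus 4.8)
The plan is to bound the total error by two pieces, an interpolation error of the Brownian motion and an error coming from the approximate time change, and to show that the second piece dominates and produces the rate $n^{-\alpha^2\beta}$. I would start from the decomposition
\begin{equation*}
\xi^{(n)}_{\tau_n(t)}-\xi_{\tau(t)}=\bigl(\xi^{(n)}_{\tau_n(t)}-\xi_{\tau_n(t)}\bigr)+\bigl(\xi_{\tau_n(t)}-\xi_{\tau(t)}\bigr).
\end{equation*}
By Condition~\ref{condbdd} the Euler slope $1/\sigma^2$ lies in $[C_2^{-2},C_1^{-2}]$, so $\varphi$ and $\varphi_n$ increase at rate at least $C_2^{-2}$; hence $\tau(t),\tau_n(t)\leq C_2^2T$ for all $t\leq T$, and every time argument above stays in the fixed compact interval $[0,C_2^2T]$. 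Let $H_\alpha$ denote the (random) $\alpha$-H\"older seminorm of the path $\xi$ on $[0,C_2^2T]$; by Kolmogorov's continuity theorem $H_\alpha\in L^q$ for every $q<\infty$, which is what will make all the random constants below $L^p$-integrable.

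For the first term, since $\xi^{(n)}$ is the piecewise-linear interpolant of $\xi$ on the mesh of size $1/n$, on each subinterval both $\xi$ and $\xi^{(n)}$ stay within $H_\alpha n^{-\alpha}$ of the left endpoint value, so $\sup_{s\leq C_2^2T}|\xi^{(n)}_s-\xi_s|\leq 2H_\alpha n^{-\alpha}$. Because $\alpha\beta<1$ we have $n^{-\alpha}\leq n^{-\alpha^2\beta}$, so this term is already of the claimed order in $L^p$ and is not the bottleneck.

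The heart of the argument is the second term. H\"older continuity of $\xi$ gives $|\xi_{\tau_n(t)}-\xi_{\tau(t)}|\leq H_\alpha|\tau_n(t)-\tau(t)|^\alpha$, and comparing $\varphi(\tau(t))=t=\varphi_n(\tau_n(t))$ together with the lower slope bound yields $\sup_{t\leq T}|\tau_n(t)-\tau(t)|\leq C_2^2\sup_{t\leq T}|\varphi_n(t)-\varphi(t)|$. It remains to estimate the Euler error $\sup_t|\varphi_n-\varphi|$ for the ODE (\ref{4}). From Condition~\ref{condbdd} and (\ref{condthm2}), the map $\sigma^{-2}$ is Lipschitz in its first argument and $\beta$-H\"older in its second. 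Writing $e(k/n)=\varphi_n(k/n)-\varphi(k/n)$ and expanding the one-step error as an integral of $\sigma^{-2}(\varphi_n(k/n),\xi_{k/n})-\sigma^{-2}(\varphi(s),\xi(s))$ over $[k/n,(k+1)/n]$, the spatial increment contributes $|\xi_{k/n}-\xi(s)|^\beta\leq H_\alpha^\beta n^{-\alpha\beta}$ while the frozen solution value contributes $|e(k/n)|$ plus a term of order $n^{-1}$. A discrete Gronwall inequality over the at most $nT$ steps then gives $\sup_{t\leq T}|\varphi_n(t)-\varphi(t)|\leq K_TH_\alpha^\beta n^{-\alpha\beta}$ for a deterministic $K_T$.

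Combining these estimates, the second term is bounded by $H_\alpha(C_2^2K_TH_\alpha^\beta n^{-\alpha\beta})^\alpha=(C_2^2K_T)^\alpha H_\alpha^{1+\alpha\beta}n^{-\alpha^2\beta}$; the extra factor $\alpha$ in the exponent --- and hence $\alpha^2\beta$ rather than $\alpha\beta$ --- is exactly the cost of reading the Brownian path at two times that differ by $n^{-\alpha\beta}$. Taking $L^p$ norms, applying Minkowski's inequality to the decomposition and using $H_\alpha^{1+\alpha\beta}\in L^p$ produces the asserted constant $\tilde K_T$. The main obstacle I expect is the ODE step: one must run a discrete Gronwall argument in which the frozen solution value feeds back into the error, carry the random H\"older constant $H_\alpha$ through the estimate, and verify that it has enough moments so that the final bound survives taking $L^p$ norms uniformly in $t$.
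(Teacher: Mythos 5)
Your proposal is correct and follows essentially the same route as the paper's own proof: the same decomposition into a piecewise-linear interpolation error of order $n^{-\alpha}$ plus a time-change error, the same reduction of $\sup_{t\leq T}|\tau_n(t)-\tau(t)|$ to $C_2^2\sup|\varphi_n-\varphi|$ via the lower bound $\varphi,\varphi_n\geq C_2^{-2}t$ (the paper's Lemma~\ref{timebdd}), and the same frozen-coefficient Gronwall estimate $\sup|\varphi_n-\varphi|\leq K_T H_\alpha^{\beta}n^{-\alpha\beta}$ whose $\alpha$-th power, through the H\"older continuity of the Brownian path, produces the rate $n^{-\alpha^2\beta}$. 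The differences are cosmetic only (the paper splits your interpolation term into two grid-point terms, uses Cauchy--Schwarz instead of your pathwise product $H_\alpha^{1+\alpha\beta}$, and takes the supremum of $|\varphi_n-\varphi|$ up to $T'=\max\{T,C_2^2T+1/n\}$ rather than $C_2^2T$), and none affects the validity of the argument.
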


We will use the following lemma that is an immediate consequence of
Theorem~(2.1) in \cite{RandY}.
\begin{lemm}\label{reguBM}
	Let $\{\xi_t\}$ be Brownian motion and denote
	\begin{equation}
		H_{\alpha,T}:=\sup_{\substack{s\neq t\\s,t\leq T}}\frac{|\xi_t-\xi_s|}{|t-s|^\alpha}.\label{holderBM}
	\end{equation}
	Then the function $T\mapsto H_{\alpha,T}$ is increasing and 
	\begin{equation}
		E[(H_{\alpha,T})^\gamma]<\infty\nonumber
	\end{equation}
	for any $\alpha\in[0,1/2)$ and $\gamma>0$.
\end{lemm}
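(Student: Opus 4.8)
The plan is to treat the two assertions separately, the monotonicity being immediate and the integrability being a direct application of the cited Kolmogorov-type criterion. For the monotonicity, I would observe that whenever $T_1\leq T_2$, the index set $\{(s,t):s\neq t,\ s,t\leq T_1\}$ over which the supremum defining $H_{\alpha,T_1}$ is taken is contained in the corresponding set for $H_{\alpha,T_2}$. Since enlarging the domain of a supremum cannot decrease its value, $H_{\alpha,T_1}\leq H_{\alpha,T_2}$ holds pathwise, so $T\mapsto H_{\alpha,T}$ is increasing. No probabilistic input is needed for this part.

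For the integrability I would first record the Gaussian moment bound. Because $\xi_t-\xi_s\sim\mathcal{N}(0,|t-s|)$, for any even integer $q$ one has $E[|\xi_t-\xi_s|^q]=c_q\,|t-s|^{q/2}$, where $c_q=E[|Z|^q]<\infty$ for a standard normal $Z$. Writing the exponent as $q/2=1+(q/2-1)$, this is exactly the hypothesis of the Kolmogorov continuity criterion, Theorem~(2.1) in \cite{RandY}, with moment parameter $q$ and regularity margin $\varepsilon=q/2-1$. That criterion then yields, on $[0,T]$, Hölder continuity of $\xi$ up to any exponent strictly below $\varepsilon/q=1/2-1/q$, together with finiteness of the $q$-th moment of the associated Hölder seminorm; that is, $E[(H_{\alpha,T})^q]<\infty$ whenever $\alpha<1/2-1/q$.

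It then remains to cover an arbitrary exponent $\alpha\in[0,1/2)$ and an arbitrary moment order $\gamma>0$. Given such $\alpha$ and $\gamma$, I would choose an even integer $q$ large enough that simultaneously $1/2-1/q>\alpha$ and $q\geq\gamma$; both requirements can be met since $1/2-1/q\uparrow 1/2$ as $q\to\infty$. The first condition gives $E[(H_{\alpha,T})^q]<\infty$ by the previous step, and the second, via Lyapunov's inequality on the probability space, upgrades this to
\begin{equation}
E[(H_{\alpha,T})^\gamma]\leq\left(E[(H_{\alpha,T})^q]\right)^{\gamma/q}<\infty.\nonumber
\end{equation}

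The only genuine content is the bookkeeping that, by letting $q\to\infty$, recovers the full range $\alpha<1/2$ from the criterion's window $\alpha<1/2-1/q$; matching the exponent parameters in Theorem~(2.1) of \cite{RandY} correctly is the one place to be careful. The passage from a single high moment to all moments $\gamma>0$ is routine, so I expect no real obstacle beyond this matching of exponents.
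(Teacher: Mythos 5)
Your proposal is correct and follows exactly the route the paper intends: the paper offers no proof beyond calling the lemma an immediate consequence of Theorem~(2.1) in \cite{RandY}, and your argument simply spells out that application --- Gaussian moments give the hypothesis with exponents $(q,\,q/2-1)$, the criterion gives $E[(H_{\alpha,T})^{q}]<\infty$ for $\alpha<1/2-1/q$, and letting $q\to\infty$ plus Jensen's inequality covers all $\alpha\in[0,1/2)$ and $\gamma>0$, while monotonicity in $T$ is trivial from enlarging the supremum's index set. The one detail worth a half-sentence in a final write-up is that the criterion produces a H\"older-continuous \emph{modification}, which is indistinguishable from $\xi$ since Brownian motion already has continuous paths, so the moment bound applies to $H_{\alpha,T}$ itself.
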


\begin{lemm}\label{timebdd}
	Let {$\sigma(t,x)$ satisfy Condition~\ref{condbdd}} and $\varphi(t),\varphi_n(t)$ be defined as (\ref{4}) and (\ref{phi_inter}). Then for each $\gamma>0$ and $T>0$,
	\begin{align}
	\sup_{t\leq T}|\tau_n(t)-\tau(t)|\leq C_2^2\sup_{t\leq C_2^2T}|\varphi_n(t)-\varphi(t)|\nonumber
	\end{align}
\end{lemm}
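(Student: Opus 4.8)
The plan is to reduce everything to elementary monotonicity facts coming from Condition~\ref{condbdd}, after which the bound on the inverse functions follows from a single telescoping identity. First I would record that $\varphi'(s)=1/\sigma^2(\varphi(s),\xi(s))\in[C_2^{-2},C_1^{-2}]$ and that, by (\ref{phi_inter}), $\varphi_n$ is piecewise linear with each slope equal to $1/\sigma^2(\varphi_n(k/n),\xi_{k/n})\in[C_2^{-2},C_1^{-2}]$. Hence both $\varphi$ and $\varphi_n$ are continuous, strictly increasing bijections of $[0,\infty)$ onto itself (their slopes are bounded below by $C_2^{-2}>0$, so they tend to $+\infty$), and consequently $\tau=\varphi^{-1}$ and $\tau_n=\varphi_n^{-1}$ are well defined and increasing on all of $[0,\infty)$. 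Moreover $\tau_n$ is piecewise linear with slopes $\sigma^2(\varphi_n(k/n),\xi_{k/n})\le C_2^2$, so, since $\tau_n(0)=0$, one has $\tau_n(t)\le C_2^2 t\le C_2^2 T$ for every $t\le T$. This is precisely what produces the range $[0,C_2^2 T]$ on the right-hand side.

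The heart of the argument is the identity $\varphi(\tau(t))=t=\varphi_n(\tau_n(t))$, which holds for all $t$ by definition of the inverses. Inserting the intermediate term $\varphi(\tau_n(t))$ gives
\[
\varphi(\tau(t))-\varphi(\tau_n(t))=\varphi_n(\tau_n(t))-\varphi(\tau_n(t)).
\]
To convert the left-hand side into a bound on $|\tau(t)-\tau_n(t)|$, I would use the lower slope bound $\varphi'\ge C_2^{-2}$ in the form $|\varphi(a)-\varphi(b)|=|\int_b^a\varphi'(u)\,\mathrm{d}u|\ge C_2^{-2}|a-b|$, so that $|a-b|\le C_2^2|\varphi(a)-\varphi(b)|$. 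Taking $a=\tau(t)$ and $b=\tau_n(t)$ and combining with the displayed identity yields the pointwise estimate
\[
|\tau(t)-\tau_n(t)|\le C_2^2\,\bigl|\varphi_n(\tau_n(t))-\varphi(\tau_n(t))\bigr|.
\]

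To conclude I would take the supremum over $t\le T$. On the right-hand side the argument $\tau_n(t)$ stays inside $[0,C_2^2 T]$ by the first paragraph, so the supremum is dominated by $\sup_{s\le C_2^2 T}|\varphi_n(s)-\varphi(s)|$, which is exactly the claimed inequality. I should stress that this is a purely pathwise (deterministic) estimate valid on every sample path, so the parameter $\gamma$ in the statement plays no role here; it is presumably retained only because the right-hand side will later be estimated in $L^\gamma$. The only points requiring a little care are the invertibility of the merely piecewise-linear $\varphi_n$ and the control of the range of $\tau_n$, both of which follow directly from Condition~\ref{condbdd}; beyond that I do not expect any genuine obstacle.
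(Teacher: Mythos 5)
Your proof is correct and takes essentially the same route as the paper's: both arguments rest on the two facts that $\varphi(t),\varphi_n(t)\geq C_2^{-2}t$ (equivalently, $\tau_n(t)\leq C_2^2 t$, which controls the range of the supremum) and that increments of $\varphi$ are bounded below by $C_2^{-2}$ times the time increment (equivalently, $\varphi^{-1}$ is $C_2^2$-Lipschitz), glued together through the inverse-function identity $\varphi(\tau(t))=t=\varphi_n(\tau_n(t))$. The only cosmetic difference is that the paper changes variables $t\mapsto\varphi_n(t)$ inside the supremum before invoking the Lipschitz bound, whereas you apply the pointwise increment bound first and restrict the range afterwards; your observation that $\gamma$ plays no role in this purely pathwise estimate is also accurate.
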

\begin{proof}
	It follows from Condition~\ref{condbdd} that 
	\begin{align}
	\varphi(t)&\geq\int_{0}^{t}\frac{ds}{C_2^2}=C_2^{-2}t\nonumber,\\
	\varphi_n(t)&\geq\sum_{j=0}^{k-1}\frac{1}{C_2^2}\frac{1}{n}+C_2^{-2}(t-\frac{k}{n})=C_2^{-2}t\nonumber
	\end{align}
	for $t\in[\frac{k}{n},\frac{k+1}{n})$. Therefore, due to the continuity and the strictly increasing property of $\tau(t),\,\tau_n(t)$ and the bounded property of $\varphi(t), \varphi_n(t)$, we get
	\begin{align}
	&\sup_{t\leq T}\left|(\varphi_n^{-1}(t)-\varphi^{-1}(t)\right|\nonumber\\
	\leq&\sup_{t\leq C_2^2 T}\left|(\varphi_n^{-1}(\varphi_n(t))-\varphi^{-1}(\varphi_n(t))\right|\nonumber\\
	=&\sup_{t\leq C_2^2 T}\left|t-\varphi^{-1}(\varphi_n(t))\right|\nonumber\\
	=&\sup_{t\leq C_2^2 T}\left|\varphi^{-1}(\varphi(t))-\varphi^{-1}(\varphi_n(t))\right|\nonumber\\
	\leq&C_2^2\sup_{t\leq C_2^2 T}\left|\varphi(t)-\varphi_n(t)\right|\label{suphi}\nonumber.
	\end{align}
\end{proof}
\begin{proof}[{\bf Proof of Theorem~\ref{mainresult}}]
	First, from Minkowski's inequality, we have
	\begin{align}
		&\left\{ \mathbb{E}\left[\sup_{t\leq T}\left|\xi^{(n)}_{\tau_n(t)} - \xi_{\tau(t)}\right|^p\right] \right\}^{1/p}\nonumber\\
		\leq&  \left\{\mathbb{E}\left[\sup_{t\leq T}\left|\xi^{(n)}_{\tau_n(t)} - \xi_{\frac{\lfloor n\tau_n(t
				)\rfloor}{n}}\right|^p\right]
		\right\}^{1/p}
		+ \left\{ \mathbb{E}\left[\sup_{t\leq T}\left|\xi_{\frac{\lfloor n\tau_n(t
				)\rfloor}{n}} -
	 \xi_{\tau(t)}\right|^p\right] \right\}^{1/p}\nonumber,
	\end{align}
	where $\lfloor t\rfloor$ is the largest integer less than
 $t$. Since $\xi^{(n)}_t$ is { the} interpolation of
 { the} sequence $\{\xi_{j/n}\}_{j=0,1,2,\cdots}$, it follows that
	\begin{equation}
		\left|\xi^{(n)}_{t}-\xi_{\frac{\lfloor nt\rfloor}{n}}\right|\leq\left|\xi^{(n)}_{\frac{\lfloor nt\rfloor +1}{n}}-\xi_{\frac{\lfloor nt\rfloor}{n}}\right|=\left|\xi_{\frac{\lfloor nt\rfloor +1}{n}}-\xi_{\frac{\lfloor nt\rfloor}{n}}\right|\nonumber
	\end{equation}
	Therefore, using Minkowski's inequality again, we obtain
	\begin{align}
	&\left\{ E\left[\sup_{t\leq T}\left|\xi^{(n)}_{\tau_n(t)} - \xi_{\tau(t)}\right|^p\right] \right\}^{1/p}\nonumber\\
	\leq&  \left\{E\left[\sup_{t\leq T}\left|\xi_{\frac{\lfloor n\tau_n(t
			)\rfloor+1}{n}} - \xi_{\frac{\lfloor n\tau_n(t
			)\rfloor}{n}}\right|^p\right]
	\right\}^{1/p}\label{term1}\\
	&+ \left\{ E\left[\sup_{t\leq T}\left|\xi_{\frac{\lfloor n\tau_n(t
			)\rfloor}{n}} - \xi_{\tau_n(t)}\right|^p\right] \right\}^{1/p} \label{term2}\\
	&+\left\{ E\left[\sup_{t\leq T}\left|\xi_{\tau_n(t)} - \xi_{\tau(t)}\right|^p\right] \right\}^{1/p}\label{term3}.
	\end{align}
	Let us provide the desired conclusion by estimating convergence
 rate of (\ref{term1})-(\ref{term3}) at $n\rightarrow\infty$. Define
 $H_{\alpha,T}$ as (\ref{holderBM}) for
 $(\alpha,T)\in(0,1/2)\times[0,\infty)$ and set
 $T':=\max\{T,C_2^2T+1/n\}$ ,  $\tilde{H}:=H_{\alpha,T'}\ (\geq
 H_{\alpha,T})$. Because of the monotone increasing property of $H_t$
 with respect to $t$, Lemma~\ref{reguBM} implies
	\begin{align}
	\left\{E\left[\sup_{t\leq
	 T}\left|\xi_{(\lfloor n \tau_n(t
		)\rfloor+1) n^{-1}} - \xi_{\lfloor n\tau_n(t
		)\rfloor n^{-1}}\right|^p\right]\right\}^{1/p}
	\leq& \left\{E\left[\tilde{H}^p \right]\right\}^{1/p}n^{-\alpha}\label{righterror}\\
	\left\{ E\left[\sup_{t\leq
	 T}\left|\xi_{\lfloor n\tau_n(t
		)\rfloor n^{-1}} - \xi_{\tau_n(t)}\right|^p\right] \right\}^{1/p} \leq& \left\{E\left[\tilde{H}^p \right]\right\}^{1/p}n^{-\alpha}\label{lefterror}
	\end{align}
	Now we have the rate of convergence of { the terms}
 (\ref{term1}) and (\ref{term2}). It remains to prove that the
 convergence rate of the term (\ref{term3}) is
 $n^{-\alpha^2\beta}$. From Lemma~\ref{reguBM}, Lemma~\ref{timebdd} and
 H\"{o}lder's inequality, we have
	\begin{align}
	\left\{E\left[\sup_{t\leq T}\left|\xi_{\varphi_n^{-1}(t)} - \xi_{\varphi^{-1}(t)}\right|^p \right]\right\}^{1/p} \leq& \left\{E\left[\tilde{H}^p C_2^{2p\alpha}\sup_{t\leq C_2^2 T}\left|\varphi(t)-\varphi_n(t)\right|^{p\alpha} \right]\right\}^{1/p}\nonumber\\
	\leq&\left\{E\left[\tilde{H}^{2p}
	\right]\right\}^{1/2p} C_2^{2p\alpha}
	\left\{E\left[\sup_{t\leq T'}\left|\varphi(t)-\varphi_n(t)\right|^{2p\alpha} \right]\right\}^{1/2p}\label{term4}
	\end{align}
		We provide the convergence rate of (\ref{term4}) by estimating the error function $e_n(t):=\varphi^{(n)}(t)-\varphi(t)$. For positive number $h$, define a function $\psi_h:[0,\infty)\rightarrow\mathbb{R}$ as
	\begin{align}
	\psi_h(t):=\frac{1}{h}\int_{t}^{t+h}\left(\sigma^{-2}(\varphi(s),\xi_s)-\sigma^{-2}(\varphi(t),\xi_{t})\right)ds\nonumber
	\end{align}
	From Lemma~\ref{reguBM} and the condition (\ref{condthm2}), for $t\leq T'-h$ and we obtain
	\begin{align}
	|\psi_h(t)|\leq& \frac{1}{h}\left|\int_{t}^{t+h}\left\{\sigma^{-2}(\varphi(s),\xi_s)-\sigma^{-2}(\varphi(t),\xi_{t})\right\}ds\right|\nonumber\\
	\leq&\frac{1}{h}\int_{t}^{t+h}2C_1^{-3}L_{T'}|\varphi(s)-\varphi(t)|+C_\beta|\xi_t-\xi_s|^\beta ds\nonumber\\
	\leq&\frac{1}{h}\int_{t}^{t+h}2C_1^{-3}(L_{T'}+C_\beta)\left|\int_{t}^{s}|C_1^{-2}|du+|\xi_t-\xi_s|^\beta\right|ds\nonumber\\
	\leq&\frac{1}{h}\int_{t}^{t+h}2C_1^{-3}(L_{T'}+C_\beta)\left|\int_{t}^{s}|C_1^{-2}|du+\tilde{H}|t-s|^{\alpha\beta}\right|ds\nonumber.
	\end{align}
	From Lemma~\ref{reguBM}, there is a random variable $R$
 depending on $T'$ which has moments { of any} order and satisfies that
	\begin{align}
	|\psi_h(t)|\leq\frac{1}{h}\int_{t}^{t+h}Rh^{\alpha\beta} ds=Rh^{\alpha\beta}\nonumber.
	\end{align}
	On the other hand, from the definition of $\varphi(t)$, 
	\begin{align}
	\varphi(t)=\varphi(s)+h\sigma(\varphi(s),\xi_{s})+h\psi_h(s),\hspace{10pt}t>s\nonumber
	\end{align}
	where $h=t-s$. Then for $t\in[t_i,t_{i+1}]$,
	\begin{align}
	e_n(t)=&e_n(t_i)+(t-t_i)\{\sigma^{-2}(\varphi^{(n)}(t_i),\xi_{t_i})-\sigma^{-2}(\varphi(t_i),\xi_{t_i})\}+(t-t_i)\psi_{t-t_i}(t_i)\nonumber
	\end{align}
	and by { the} Lipschitz continuity of $\sigma(t,x)$ over with respect to $t$,
	\begin{align}
	|e_n(t)|\leq&|e_n(t_i)|+(t-t_i)L_{T'}|e_n({t_i})|+(t-t_i)|\psi_{t-t_i}(t_i)|\nonumber\\
	\leq&(1+hL_{T'})|e_n({ t_i})|+Rh^{\alpha\beta+1}.\nonumber
	\end{align}
	Repeating this calculus and using the fact that $1+L_{T'}h<\mathrm{e}^{L_{T'}h}$, we have
	\begin{align}
	{\sup_{s\leq t}|e_n(s)|}\leq\frac{Rh^{\alpha\beta}}{L_{T'}}\{(1+hL_{T'})^{i+1}-1\}\leq\frac{Rh^{\alpha\beta}}{L_{T'}}\{\mathrm{e}^{L_{T'}(T'+1)}-1\}\nonumber	\end{align}
	Because of the integrable property of the random variable $R$ and {\it Cauchy-Schwartz's inequality}, there exists a positive number $K$ depending on $T'$ such that
	\begin{align}
	\left\{E\left[\sup_{t\leq  T'}\left|\varphi(t)-\varphi_n(t)\right|^{2p\alpha}\right]\right\}^{1/2p}\leq&K\left(\frac{1}{n}\right)^{\alpha^2\beta}\nonumber
	\end{align}
	Now we complete the proof 
	
\end{proof}
\begin{rema}
	We now have that our approximation converges to the solution of
 (\ref{main}) and the rate of convergence is $n^{-\alpha^2\beta}$.
	Let us compare our result with {(\ref{p1}) by}  Gy\"{o}ngy and
 R\'{a}sonyi~\cite{gyongy2002}.
When $1/2<\beta<2/3$, it is easily seen that
 we can take a number $\alpha\in(0,1/2)$ sufficient closed to $1/2$ for
 $\beta$ such that 
	\begin{align}
	n^{-\alpha^2\beta}<n^{-(\beta-1/2)/p}\nonumber.
	\end{align}
{ Therefore our method enjoys a better estimate of the
 convergence rate than that of  the Euler-Maruyama scheme for $\beta \in
 (1/2,2/3)$. For $\beta \in (0,1/2]$, the convergence of the
 Euler-Maruyama approximation is not known. 
For $\beta \geq 2/3$, 
(\ref{p1}) provides a better rate, while Theorem~\ref{theosmooth} below 
implies that the estimated rate in Theorem~\ref{mainresult} is not sharp when
$\sigma$ is smooth.
}
\end{rema}
{We are going to provide a better estimate of the convergence rate of
our scheme when $\sigma$ is smooth. 
Denote by $\mathcal{L}^q$ }the class of
 	 stochastic process $\{X_t\}$ and ${q}\in\mathbb{N}$ such that 
	 \begin{equation}
	 	E[\int_0^t {|X_s|^q}ds]<\infty,\hspace{10pt}0\leq t<\infty,\nonumber
	 \end{equation}
	 and {by} $\sigma_t,\sigma_x,\sigma_{x,x}$ the partial derivatives of $\sigma$ :
	 \begin{align}
	\sigma_t(t,x):=\frac{\partial\sigma}{\partial t}(t,x),
	  \hspace{5pt}\sigma_x(t,x):=\frac{\partial\sigma}{\partial
	  x}(t,x),\hspace{5pt}\sigma_{xx}(t,x):=\frac{\partial^2\sigma}{\partial
	  x^2}(t,x).
	 \end{align}
\begin{theo}\label{theosmooth}
{Suppose that $\sigma: [0,\infty)\times\mathbb{R}\mapsto\mathbb{R}$}
 	belongs to $C^{2,2}$ and satisfies { the} following conditions
 { in addition to Condition~\ref{condbdd}}:
	\begin{itemize}
		\item[(i)] For { any} $T>0$, there is a constant $L_T>0$ such that 
		\begin{align}
			|\sigma(s,x)-\sigma(t,x)|\leq L_T|s-t|,\hspace{10pt}\forall x\in\mathbb{R},\ \forall s,t\in[0,T].
		\end{align}
		\item[(ii)] There exists some positive constants $C_3, C_4$ such that
		\begin{align}
	{	|\sigma_{xx}(t,x)| + | \sigma_t(t,x)|\leq C_3
		 \exp\{C_4(t+|x|)\}
,\hspace{10pt}\forall x\in\mathbb{R},\ \forall t\in[0,T].}
		\end{align} 
	\end{itemize}
	 Then for all $T>0$, $\alpha\in(0,1/2)$ and {$p \geq 1$}, there exists some constant $K_T>0$ such that
	\begin{equation}
		\left\{E\left[\sup_{t\leq T}\left|\xi^{(n)}_{\tau_n(t)}-\xi_{\tau(t)}\right|^p\right]\right\}^{1/p}\leq K_T n^{-\alpha}\nonumber
	\end{equation}
\end{theo}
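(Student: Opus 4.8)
The plan is to reuse verbatim the three–term decomposition (\ref{term1})--(\ref{term3}) from the proof of Theorem~\ref{mainresult} and to notice that smoothness is needed in exactly one place. The terms (\ref{term1}) and (\ref{term2}) are already bounded by $\{E[\tilde H^p]\}^{1/p}n^{-\alpha}$ via (\ref{righterror})--(\ref{lefterror}), using only Lemma~\ref{reguBM}; no regularity of $\sigma$ enters there. For the term (\ref{term3}), inequality (\ref{term4}) together with Lemma~\ref{timebdd} reduces the whole problem to controlling $\{E[\sup_{t\le T'}|\varphi(t)-\varphi_n(t)|^{2p\alpha}]\}^{1/2p}$. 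Thus it suffices to upgrade the ODE error bound: I would prove that for every $q\ge 1$ and every finite horizon there is $C_q$ with $\{E[\sup_{t}|\varphi_n(t)-\varphi(t)|^q]\}^{1/q}\le C_q/n$. Feeding $q=2p\alpha$ into (\ref{term4}) then gives a factor $(C_q/n)^{\alpha}=O(n^{-\alpha})$, so all three terms are $O(n^{-\alpha})$ and the theorem follows. The exponent improves from $\alpha^2\beta$ to $\alpha$ precisely because the ODE error improves from $O(n^{-\alpha\beta})$ (the pathwise H\"older bound used before) to $O(n^{-1})$.

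To obtain the sharpened ODE estimate I would write $F:=\sigma^{-2}$ and use the exact representations $\varphi(t)=\int_0^t F(\varphi(s),\xi_s)\,ds$ and $\varphi_n(t)=\int_0^t F(\varphi_n(\eta(s)),\xi_{\eta(s)})\,ds$, with $\eta(s):=\lfloor ns\rfloor/n$, so that $e_n(t):=\varphi_n(t)-\varphi(t)=\int_0^t[F(\varphi_n(\eta(s)),\xi_{\eta(s)})-F(\varphi(s),\xi_s)]\,ds$. Splitting the integrand as $[F(\varphi_n(\eta(s)),\xi_{\eta(s)})-F(\varphi(\eta(s)),\xi_{\eta(s)})]+[F(\varphi(\eta(s)),\xi_{\eta(s)})-F(\varphi(s),\xi_s)]$, the first bracket equals $F_t(\theta_s,\xi_{\eta(s)})\,e_n(\eta(s))$ by the mean value theorem, and $|F_t|=|2\sigma^{-3}\sigma_t|\le 2C_1^{-3}L_T$ is bounded by Condition~\ref{condbdd} and (i); this generates the Gronwall part. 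The decisive gain comes from the second bracket, the local discretization error, which I would analyse by It\^o's formula applied to $g(s):=F(\varphi(s),\xi_s)$: since $\varphi$ is of finite variation with $d\varphi(s)=F(\varphi(s),\xi_s)\,ds$ and $d\xi_s=db_s$, one has $g(s)-g(\eta(s))=\int_{\eta(s)}^s[F_tF+\tfrac12 F_{xx}](\varphi(u),\xi_u)\,du+\int_{\eta(s)}^s F_x(\varphi(u),\xi_u)\,db_u$.

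Integrating this local error in $s$ and applying the stochastic Fubini theorem, the finite-variation part is bounded by $n^{-1}\int_0^T|F_tF+\tfrac12 F_{xx}|\,du$, whereas the martingale part becomes $M_t:=\int_0^t(\kappa(u)-u)F_x(\varphi(u),\xi_u)\,db_u$ with $\kappa(u):=(\lfloor nu\rfloor+1)/n$, so that $\langle M\rangle_T\le n^{-2}\int_0^T F_x^2(\varphi(u),\xi_u)\,du$. By the Burkholder--Davis--Gundy inequality this gives $\{E[\sup_{t\le T}|M_t|^q]\}^{1/q}=O(n^{-1})$. Collecting everything yields the pathwise inequality $\sup_{s\le t}|e_n(s)|\le 2C_1^{-3}L_T\int_0^t\sup_{r\le s}|e_n(r)|\,ds+\Phi_n(t)$, where $\Phi_n(t)$ is the sum of the drift and martingale suprema; since the coefficient $2C_1^{-3}L_T$ is deterministic, Gronwall's inequality gives $\sup_{s\le T}|e_n(s)|\le e^{2C_1^{-3}L_T T}\Phi_n(T)$ pathwise, and taking $L^q$-norms delivers the required $O(n^{-1})$ rate.

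The main obstacle is integrability: It\^o's formula, the stochastic Fubini step, the BDG bound, and the moments of the drift and of $\langle M\rangle_T$ all require control of the moments of $F_x,F_{xx}$ and $F_tF$ evaluated along $(\varphi(u),\xi_u)$. Here the first argument $\varphi(u)\in[C_2^{-2}u,C_1^{-2}u]$ is bounded on $[0,T]$ while $\xi_u$ is Gaussian, so it is enough to show these quantities have at most exponential growth in $|x|$. Since $F_x=-2\sigma^{-3}\sigma_x$ and $F_{xx}=6\sigma^{-4}\sigma_x^2-2\sigma^{-3}\sigma_{xx}$, I need the growth of $\sigma_x$, which is not assumed directly. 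I would recover it from (ii) and the boundedness $C_1\le\sigma\le C_2$: by the mean value theorem there is $x^*\in[x,x+1]$ with $|\sigma_x(t,x^*)|=|\sigma(t,x+1)-\sigma(t,x)|\le C_2$, hence $|\sigma_x(t,x)|\le C_2+\int_x^{x+1}|\sigma_{xx}(t,y)|\,dy\le C_2+C_3 e^{C_4(t+|x|+1)}$, so $\sigma_x$ too grows at most exponentially. Consequently $F_x,F_{xx},F_tF$ have exponential growth in $|x|$ and, evaluated at $(\varphi(u),\xi_u)$, lie in every $\mathcal{L}^q$, which is exactly what makes the BDG and Gronwall estimates valid in arbitrary $L^q$ and completes the argument.
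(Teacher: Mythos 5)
Your proposal follows essentially the same route as the paper's own proof: the reduction via (\ref{term1})--(\ref{term4}) to showing $\{E[\sup_{t\leq T'}|\varphi_n(t)-\varphi(t)|^{2p\alpha}]\}^{1/2p}=O(n^{-\alpha})$, the semimartingale structure of $X_t=\sigma^{-2}(\varphi(t),\xi_t)$ (your explicit It\^o computation with drift $F_tF+\tfrac12F_{xx}$ and martingale integrand $F_x$ is exactly what the paper records abstractly as $\gamma_t,\delta_t$ in (\ref{sigmasemi})), the Lipschitz-in-first-argument term closed by Gronwall, and the Burkholder--Davis--Gundy inequality on the martingale part of the local discretization error. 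Your stochastic Fubini step and the paper's by-parts identity $\int_s^t(X_u-X_s)\,du=\int_s^t(t-u)\,dX_u$ are the same identity in different clothing, and both yield the weight of size $O(n^{-1})$ that produces the rate.

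Two points of comparison. First, an imprecision on your side: stochastic Fubini produces the weight $\kappa(u)\wedge t-u$, not $\kappa(u)-u$, and the process $t\mapsto\int_0^t(\kappa(u)\wedge t-u)F_x(\varphi(u),\xi_u)\,db_u$ is \emph{not} a martingale, so BDG cannot be applied to its supremum directly. The discrepancy from your $M_t$ is a boundary term supported on the last subinterval $[\lfloor nt\rfloor/n,\,t]$; this is precisely the term the paper isolates as the second summand in (\ref{supito}) and bounds in (\ref{cuterm}) by $Cn^{-2p\alpha}$, using that $\sigma^{-2}$ is bounded under Condition~\ref{condbdd}. The fix is one line, but as written your pathwise inequality for $\sup_{s\leq t}|e_n(s)|$ omits this contribution. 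Second, a genuine improvement on your side: condition \emph{(ii)} controls $\sigma_{xx}$ and $\sigma_t$ but says nothing about $\sigma_x$, and the paper's Remark~\ref{ln} simply asserts that $\sigma_x(\varphi(t),\xi_t)\in\mathcal{L}^q$; your recovery of the exponential growth of $\sigma_x$ from the boundedness of $\sigma$ and the growth of $\sigma_{xx}$ (mean value theorem on $[x,x+1]$ plus integration) fills a gap the paper leaves implicit. Your choice to run Gronwall pathwise with the deterministic coefficient $2C_1^{-3}L_T$ before taking $L^q$ norms is also cleaner than the paper's Gronwall in expectation, which is delicate when $2p\alpha<1$.
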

\begin{rema}\label{ln}
{	Under the condition {\it (ii)} of
 Theorem~\ref{theosmooth}, since
 $\varphi(t)$ is bounded,
 $\sigma_t(\varphi(t),\xi_t)$, $\sigma_x(\varphi(t),\xi_t)$ and
 $\sigma_{x,x}(\varphi(t),\xi_t)$ belong to $\mathcal{L}^q$ for any $q
 \in \mathbb{N}$ .}
\end{rema}
\begin{proof}[{\bf Proof of Theorem~\ref{theosmooth}}]
{ Under the assumptions of this theorem,
 (\ref{term1})-(\ref{term4}) remain to hold. Therefore, it only} 
remains to estimate {the} convergence rate of (\ref{term4}). More
 precisely, it remains to {prove} that for $\alpha\in(0,1/2)$ and $T>0$
 there exists a constant $K_T>0$ such that
	\begin{equation}
{\left\{E\left[\sup_{t\leq
		  T'}\left|\varphi(t)-\varphi_n(t)\right|^{2p\alpha}
		 \right]\right\}^{1/2p}}
\leq K_Tn^{-\alpha}\nonumber.
	\end{equation}

{We will denote by $C$ a generic constant which depends on
 $p$, $\alpha$
 and $T$, and may change line by line.}
Let us write $X_t:=\sigma^{-2}(\varphi(t),\xi_t)$. Since $\sigma(t,x)$
 belongs to $C^{2,2}$, $X_t$ is { a} semimartingale and can
 be written { as}
	\begin{equation}
		X_t=X_0+M_t+B_t\nonumber,
	\end{equation}
	where 
	\begin{equation}
		X_0:=\sigma^{-2}(0,\xi_0),\hspace{10pt}M_t:=\int_0^t\gamma_s db_s,\hspace{10pt}B_t:=\int_0^t\delta_sds.\label{sigmasemi}
	\end{equation}
{	and $\{\gamma_t\}$ and $\{\delta_t\}$ are in
 $\mathcal{L}^q$ for any $q\in\mathbb{N}$.
 
{Note that}
	\begin{align}
	& \varphi_n(t)-\varphi(t) \\ &= 
\int_0^t\sigma^{-2}(\varphi_n(\frac{\lfloor
	 ns \rfloor}{n}),\xi_{\frac{\lfloor
	 ns\rfloor}{n}})ds  - \int_0^t X_sds 
\\
&=\int_{0}^{t}\left\{X_{\frac{\lfloor
	 ns\rfloor}{n}}-X_s\right\}ds+\int_0^t\left\{\sigma^{-2}(\varphi_n(\frac{\lfloor
	 ns \rfloor}{n}),\xi_{\frac{\lfloor
	 ns\rfloor}{n}})-\sigma^{-2}(\varphi(\frac{\lfloor
	 ns\rfloor}{n}),\xi_{\frac{\lfloor ns \rfloor}{n}})\right\}ds
	\end{align} }
	Since $\sigma^{-2}(t,x)$ is locally Lipschitz continuous, 
{ 
	\begin{align}
		E\left[\sup_{s\leq
	 t}\left|\varphi_n(s)-\varphi(s)\right|^{2p\alpha}\right]\leq
	  C &E\left[\sup_{s\leq
	 t}\left|\int_{0}^{s}X_{\frac{\lfloor
	 nu\rfloor}{n}}-X_udu\right|^{2p\alpha}\right] \\ &+C
\int_{0}^{t}E\left[\sup_{u\leq
	 s}\left|\varphi_n(u)-\varphi(u)\right|^{2p\alpha}\right]ds
	\end{align}}
	for $t\leq T'$. Then by {\it Gronwall's lemma,} we get
	\begin{align}
		E\left[\sup_{s\leq
	 t}\left|\varphi_n(s)-\varphi(s)\right|^{2p\alpha}\right]\leq C
	 E\left[\sup_{t\leq
	 T'}\left|\int_{0}^{t}X_{\frac{\lfloor
	 ns\rfloor}{n}}-X_sds\right|^{2p\alpha}\right].
	\end{align}

	Using {\it by parts formula} for $tX_t$ and $sX_s$ ($s<t$),
	\begin{equation}
		\int_{s}^{t}\left(X_u-X_{s}\right) du=\int_{s}^{t}(t-u) \ dX_u,\hspace{10pt}a.s.\nonumber
	\end{equation}
	and { so we obtain 
	\begin{align}
	 E\left[\sup_{t\leq
	 T^\prime}\left|\int_{0}^{t}X_{\frac{\lfloor
	 ns\rfloor}{n}}-X_sds\right|^{2p\alpha}\right] \leq &
	 C E\left[\sup_{\tau\leq
	 T^\prime}\left|\int_{0}^{\tau}\left(\frac{\lfloor
	 nt\rfloor+1}{n}-t\right)
	 dX_t\right|^{2p\alpha}\right]\\
 & +CE\left[\sup_{\tau\leq
	 T}\left|\int_{0}^{\tau}\left(\frac{\lfloor
	 nt\rfloor+1}{n}\wedge\tau-\frac{\lfloor nt\rfloor+1}{n}\right)
	 dX_t\right|^{2p\alpha}\right]\label{supito}.
	\end{align}}
	Because of the fact that $\frac{\lfloor nt\rfloor+1}{n}\wedge\tau-\frac{\lfloor nt\rfloor+1}{n}=0$ for $t<\frac{\lfloor n\tau \rfloor}{n}$, the definition $X_t=\sigma^{-2}(\varphi(t),\xi_t)$ and Condition~\ref{condbdd}, we can estimate the second term in (\ref{supito}) as follows.
		\begin{align}
		&E\left[\sup_{\tau\leq
			T}\left|\int_{0}^{\tau}\left(\frac{\lfloor
			nt\rfloor+1}{n}\wedge\tau-\frac{\lfloor nt\rfloor+1}{n}\right)
		dX_t\right|^{2p\alpha}\right]\nonumber\\
		=&E\left[\sup_{\tau\leq T}\left|\int_{\frac{\lfloor n\tau\rfloor}{n}}^{\tau}\left(\tau-\frac{\lfloor n\tau\rfloor+1}{n}\right)dX_t\right|^{2p\alpha}\right]\nonumber\\
		=&E\left[\sup_{\tau\leq T}\left|(\tau-\frac{\lfloor
			n\tau\rfloor+1}{n})(X_{\frac{\lfloor
				n\tau\rfloor}{n}}-X_{\tau})\right|^{2p\alpha}\right]\nonumber\\
			\leq& E\left[\frac{1}{n^{2p\alpha}}\sup_{\tau\leq
			T}\left|X_{\frac{\lfloor
				n\tau\rfloor}{n}}-X_{\tau}\right|^{2p\alpha}\right]\leq\frac{C}{n^{2p\alpha}}\label{cuterm}
		\end{align}
	To estimate the first term, we recall the notation (\ref{sigmasemi}) and obtain
	\begin{align}
		&E\left[\sup_{\tau\leq T}\left|\int_{0}^{\tau}\left(\frac{\lfloor nt\rfloor+1}{n}-t \right)dX_t\right|^{2p\alpha}\right]\nonumber\\
		\leq& {C}\left\{E\left[\sup_{\tau\leq
	 T}\left|\int_{0}^{\tau}\left(\frac{\lfloor
	 nt\rfloor+1}{n}-t\right)\
	 {\delta_tdt}\right|^{2p\alpha}\right]+E\left[\sup_{\tau\leq
	 T}\left|\int_{0}^{\tau}\left(\frac{\lfloor
	 nt\rfloor+1}{n}-t\right)\gamma_tdb_t\right|^{2p\alpha}\right]\right\}\label{semintegral}
	\end{align}
	Recalling Remark~\ref{ln}, the
 $\mathcal{L}^{2p\alpha \vee 1}$
 property of $\delta_t$ implies that 
	\begin{align}
E\left[\sup_{\tau\leq
	 T}\left|\int_{0}^{\tau}\left(\frac{\lfloor
	 nt\rfloor+1}{n}-t\right)\delta_tdt\right|^{2p\alpha}\right]
\leq
	 \frac{1}{n^{2p\alpha}}E\left[\left|\int_0^T|\delta_t|dt\right|^{2p\alpha}\right]
	 \leq
	 \frac{C}{n^{2p\alpha}}\label{drifterm}.
	\end{align}
Here we have used that
\begin{equation*}
 E\left[\left|\int_0^T|\delta_t|dt\right|^{2p\alpha}\right]
\leq 
T^{2p\alpha -1 }E\left[\int_0^T|\delta_t|^{2p\alpha}dt\right] < \infty
\end{equation*}
if $2p\alpha \geq  1$  and
 \begin{equation*}
 E\left[\left|\int_0^T|\delta_t|dt\right|^{2p\alpha}\right]
\leq 
E\left[\int_0^T|\delta_t|dt\right]^{2p\alpha} < \infty
\end{equation*}
if $2p\alpha <  1$.

	On the other hand, using { \it
 { the Burkholder-Davis-Gundy inequality}} and the $\mathcal{L}^{2p\alpha}$ property of
 {$\gamma_t$}, we obtain that for the second term of
 (\ref{semintegral}), 
	\begin{align}
		E\left[\sup_{\tau\leq
	 T}\left|\int_{0}^{\tau}\left(\frac{\lfloor
	 nt\rfloor+1}{n}-t\right)\gamma_tdb_t\right|^{2p\alpha}\right]\leq
{C}E\left[\left|\int_{0}^{T}\left|\frac{\lfloor
	 nt \rfloor+1}{n}-t\right|^2 \gamma_t^2dt\right|^{p\alpha}\right]\leq
	 \frac{C}{n^{2p\alpha}}\label{marterm}.
	\end{align}
	From (\ref{supito}) and (\ref{cuterm})-(\ref{marterm}), it
 follows that
\begin{equation*}
	 E\left[\sup_{t\leq
	 T'}\left|\int_{0}^{t}X_{\frac{\lfloor
	 ns\rfloor}{n}}-X_sds\right|^{2p\alpha}\right] 
\leq \frac{C}{n^{2p\alpha}},
\end{equation*}
which concludes the proof.

\end{proof}


\begin{thebibliography}{9}
\bibitem{Barlow} M.T. Barlow.
One Dimensional Stochastic Differential Equations with No Strong
	Solution, {\it Journal of the London Mathematical Society}, s2-26,
	225-347 (1982).

	\bibitem{bruyo2002} Bru, Bernard, and Marc Yor. Comments on the
		life and mathematical legacy of Wolfgang Doeblin.
		{\it Finance and Stochastics} 6.1 (2002): 3-47.

	\bibitem{engelbert} H. J. Engelbert, W. Schmidt, On the behaviour of certain functionals of the wiener process and applications to stochastic differential equations,  Lecture Notes in Control and Information Sciences, Springer(1981)
	\bibitem{grigorian} Grigorian, Alexander. {\it Ordinary Differential
		Equation.}  Lecture notes, available at
		https://www.math.uni-bielefeld.de/~
		grigor/odelec2009.pdf (2009): 48-49

	\bibitem{gyongy2002} István Gyöngy and Miklós Rásonyi. A note
		on Euler approximations for SDEs with Hölder continuous
		diffusion coefficients. {\it Stochastic processes and their
		applications} 121.10 (2011): 2189-2200.

	\bibitem{KandS} Karatzas, Ioannis, and Steven
		E. Shreve. {\it Brownian Motion and
		Stochastic Calculus.} Springer, New York, NY, (1998):
		281-290.

	\bibitem{KandP}  P. E. Kloeden, E. Platen, {\it Numerical Solution of
		Stochastic Differential Equations}, Springer(1992):
		339-344.


	\bibitem{RandY} Daniel Revus, Mark Yor, {\it Continuous
		Martingales and Brownian Motion}, Springer, New
		York(1999): 26-28.

	\bibitem{taguchiphd} D. Taguchi, Numerical analysis for
		stochastic differential equations with irregular
		coefficients, Doctoral Thesis, Ritsumeikan
		University(2017)


	\bibitem{watanabe} S. Watanabe and  N. Ikeda. {\it Stochastic
		differential equation and diffusion processes}, North
		Holland, Amsterdam(1984): 197-202.


\end{thebibliography}
\end{document}